\documentclass[11pt,reqno]{amsart}
\usepackage{amsmath, amssymb, amsthm}
\usepackage{url}
\usepackage{hyperref}
\usepackage{tikz}
\usepackage{ytableau}
\usepackage[utf8]{inputenc}
\usepackage{xcolor}
\usepackage{tikz-cd}
\usepackage{hyperref}
\usepackage{nccmath}

\usepackage{youngtab}

\usetikzlibrary{decorations.pathreplacing}

\setlength{\textheight}{220mm} \setlength{\textwidth}{155mm}
\setlength{\oddsidemargin}{1.25mm}
\setlength{\evensidemargin}{1.25mm} \setlength{\topmargin}{0mm}

\parskip .04in

\usetikzlibrary{decorations.pathreplacing}

\renewcommand{\(}{\left\(}
\renewcommand{\)}{\right\)}
\renewcommand{\[}{\left\[}
\renewcommand{\]}{\right\]}

\numberwithin{equation}{section}
\theoremstyle{plain}
\newtheorem{theorem}{Theorem}[section]
\newtheorem{lemma}[theorem]{Lemma}

\newtheorem{problem}[theorem]{Problem}

\newtheorem{corollary}[theorem]{Corollary}

\makeatletter
\def\proof{\@ifnextchar[{\@oproof}{\@nproof}}
\def\@oproof[#1][#2]{\trivlist\item[\hskip\labelsep\textit{#2 Proof of\
		#1.}~]\ignorespaces}
\def\@nproof{\trivlist\item[\hskip\labelsep\textit{Proof.}~]\ignorespaces}

\makeatother

\begin{document}
	
	\title{Log-convexity and the overpartition function} 
	
	\author{Gargi Mukherjee}
	\address{Institute for Algebra, Johannes Kepler University, Altenberger Straße 69, A-4040 Linz, Austria.}
	\email{gargi.mukherjee@dk-compmath.jku.at}
	
	\maketitle
	
	\begin{abstract}
	Let $\overline{p}(n)$ denote the overpartition function. In this paper, we obtain an inequality for the sequence $\Delta^{2}\log \  \sqrt[n-1]{\overline{p}(n-1)/(n-1)^{\alpha}}$ which states that
	\begin{equation*}
	\log \biggl(1+\frac{3\pi}{4n^{5/2}}-\frac{11+5\alpha}{n^{11/4}}\biggr) < \Delta^{2} \log \  \sqrt[n-1]{\overline{p}(n-1)/(n-1)^{\alpha}} < \log \biggl(1+\frac{3\pi}{4n^{5/2}}\biggr) \ \ \text{for}\ n \geq N(\alpha),
	\end{equation*}
	 where $\alpha$ is a non-negative real number, $N(\alpha)$ is a positive integer depending on $\alpha$ and $\Delta$ is the difference operator with respect to $n$. This inequality consequently implies $\log$-convexity of $\bigl\{\sqrt[n]{\overline{p}(n)/n}\bigr\}_{n \geq 19}$  and $\bigl\{\sqrt[n]{\overline{p}(n)}\bigr\}_{n \geq 4}$. Moreover, it also establishes the asymptotic growth of $\Delta^{2} \log \  \sqrt[n-1]{\overline{p}(n-1)/(n-1)^{\alpha}}$ by showing $\underset{n \rightarrow \infty}{\lim} \Delta^{2} \log \  \sqrt[n]{\overline{p}(n)/n^{\alpha}} = \dfrac{3 \pi}{4 n^{5/2}}.$
	\end{abstract}

\hspace{0.65 cm} \textbf{Mathematics Subject Classifications.} Primary 05A20; 11N37.\\
\vspace{0.3 cm}
\hspace{0.8 cm} \textbf{Keywords.} Log-convexity; Overpartitions.

\section{Introduction}\label{intro}
An overpartition of $n$ is a nonincreasing sequence of natural numbers whose sum is $n$ in which the first occurrence of a number
may be overlined and $\overline{p}(n)$ denotes the number of overpartitions of $n$. For convenience, define $\overline{p}(0)=1$. For example, there are $8$ overpartitions of $3$ enumerated by $3, \overline{3}, 2+1, \overline{2}+1, 2+\overline{1}, \overline{2}+\overline{1},1+1+1, \overline{1}+1+1$. Systematic study of overpartition began with the work of Corteel and Lovejoy \cite{CorteelLovejoy}, although it has been studied under different nomenclature that dates back to MacMahon. Analogous to Hardy-Ramanujan-Rademacher formula for partition function (cf. \cite{RamanujanHardy},\cite{Rademacher}), Zuckerman \cite{Zuckerman} gave a formula for $\overline{p}(n)$ that reads
\begin{equation}\label{Zuckerman}
\overline{p}(n)=\frac{1}{2\pi}\underset{2 \nmid k}{\sum_{k=1}^{\infty}}\sqrt{k}\underset{(h,k)=1}{\sum_{h=0}^{k-1}}\dfrac{\omega(h,k)^2}{\omega(2h,k)}e^{-\frac{2\pi i n h}{k}}\dfrac{d}{dn}\biggl(\dfrac{\sinh \frac{\pi \sqrt{n}}{k}}{\sqrt{n}}\biggr),
\end{equation}
where
\begin{equation*}
\omega(h,k)=\text{exp}\Biggl(\pi i \sum_{r=1}^{k-1}\dfrac{r}{k}\biggl(\dfrac{hr}{k}-\biggl\lfloor\dfrac{hr}{k}\biggr\rfloor-\dfrac{1}{2}\biggr)\Biggr)
\end{equation*}
for positive integers $h$ and $k$. In somewhat a similar spirit as Lehmer \cite{Lehmer} obtained an error bound for the partition function, Engel \cite{Engel} provided an error term for $\overline{p}(n)$
\begin{equation}\label{Engel1}
\overline{p}(n)=\frac{1}{2\pi}\underset{2 \nmid k}{\sum_{k=1}^{N}}\sqrt{k}\underset{(h,k)=1}{\sum_{h=0}^{k-1}}\dfrac{\omega(h,k)^2}{\omega(2h,k)}e^{-\frac{2\pi i n h}{k}}\dfrac{d}{dn}\biggl(\dfrac{\sinh \frac{\pi \sqrt{n}}{k}}{\sqrt{n}}\biggr)+R_{2}(n,N),
\end{equation}
where
\begin{equation}\label{Engel2}
\bigl|R_{2}(n,N)\bigr|< \dfrac{N^{5/2}}{\pi n^{3/2}} \sinh \biggl(\dfrac{\pi \sqrt{n}}{N}\biggr).
\end{equation}
A positive sequence $\{a_n\}_{n \geq 0}$ is called $\log$-convex if for $n \geq 1$,
\begin{equation*}
a^2_n-a_{n-1}a_{n+1}\leq 0,
\end{equation*}
and it is called $\log$-concave if for $n \geq 1$,
\begin{equation*}
a^2_n-a_{n-1}a_{n+1}\geq 0.
\end{equation*}
Engel \cite{Engel} proved that $\{\overline{p}(n)\}_{n \geq 2}$ is $\log$-concave by using the asymptotic formula \eqref{Engel1} with $N=2$ followed by \eqref{Engel2}. Prior to Engel's work on overpartitions, $\log$-concavity of partition function $p(n)$ and its associated inequalities has been studied in a broad spectrum, for example see \cite{Chentalk}, \cite{Chen2}, and \cite{DeSalvoPak}. Following the same line of studies, Liu and Zhang \cite{LiuZhang} proved a list of inequalitites for overpartition function.\\
Sun \cite{Sun} initiated the study on $\log$-convexity problems associated with $p(n)$, later settled by Chen and Zheng \cite[Theorem 1.1-1.2]{Chen3}. In a more general setting, Chen and Zheng studied $\log$-convexity of $\{\sqrt[n]{p(n)/n^{\alpha}}\}_{n \geq n(\alpha)}$ (cf. \cite[Theorem 1.3]{Chen3}). Moreover, they discovered the asymptotic growth of the sequence $\Delta^2 \log \sqrt[n]{p(n)}$ (cf. \cite[Theorem 1.4]{Chen3}).\\
The main objective of this paper is to prove all the theorems \cite[Theorem 1.1-1.4]{Chen3} but in context of overpartitions. Our goal is to obtain a much more general inequality, given in Theorem \ref{mainresult}, which at once implies \cite[Theorem 1.1-1.4]{Chen3} for $\overline{p}(n)$, presented in Corollary \ref{cor1}-\ref{cor4}. More explicitly, in Theorem \ref{mainresult}, we get a somewhat symmetric upper and lower bound of $\sqrt[n]{\overline{p}(n)/n^{\alpha}}$, as shown in \eqref{mainresulteqn}. We note that the lower bound presented in \eqref{mainresulteqn} depicts a finer inequality than merely stating $\Delta^{2} \log \sqrt[n]{\overline{p}(n)/n^{\alpha}} > 0$ which implies $\log$-convexity. In another direction, we note that \eqref{mainresulteqn} readily suggests that $\dfrac{3\pi}{4}$ is the best possible constant so as to understand the asymptotic growth of $\Delta^{2} \log \sqrt[n]{\overline{p}(n)/n^{\alpha}}$, given in Corollary \ref{cor4}.\\
For $\alpha \in \mathbb{R}_{\geq 0}$, define
$r_{\alpha}(n):=\sqrt[n]{\overline{p}(n)/n^{\alpha}}$.

\begin{theorem}\label{mainresult}
Let $\alpha \in \mathbb{R}_{\geq 0}$ and 
$$N(\alpha):=
\begin{cases}
\text{$\max\Biggl\{\Bigl[\dfrac{3490}{\alpha}\Bigr]+2,\Bigl \lceil \Bigl(\dfrac{4(11+5\alpha)}{3\pi}\Bigr)^4\Bigr \rceil,5505\Biggr\}$} &\quad\text{if $\alpha \in \mathbb{R}_{>0}$},\\
\text{$4522$} &\quad\text{if $\alpha = 0$.}\\
\end{cases}$$
Then for $n \geq N(\alpha)$,
\begin{equation}\label{mainresulteqn}
\log \biggl(1+\frac{3\pi}{4n^{5/2}}-\frac{11+5\alpha}{n^{11/4}}\biggr) < \Delta^{2} \log r_{\alpha}(n-1) < \log \biggl(1+\frac{3\pi}{4n^{5/2}}\biggr).
\end{equation}
\end{theorem}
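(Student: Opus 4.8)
The plan is to convert the circle-method formula into a fully explicit asymptotic expansion of $\log\overline{p}(n)$ with a negligible error, insert it into the symmetric second difference, and then match the outcome against the Taylor expansions of the two logarithms on the right of \eqref{mainresulteqn}. I would start from Engel's formula \eqref{Engel1}--\eqref{Engel2} with $N=2$. Since $k=1$ is the only odd modulus not exceeding $2$, the main term is the single $k=1$ contribution
\begin{equation*}
M(n)=\frac{1}{2\pi}\frac{d}{dn}\Bigl(\frac{\sinh(\pi\sqrt{n})}{\sqrt{n}}\Bigr)=\frac{e^{\pi\sqrt{n}}(\pi\sqrt{n}-1)+e^{-\pi\sqrt{n}}(\pi\sqrt{n}+1)}{8\pi n^{3/2}},
\end{equation*}
while \eqref{Engel2} bounds $|R_2(n,2)|$ by a constant multiple of $n^{-3/2}\sinh(\pi\sqrt{n}/2)$. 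Comparing with $M(n)\sim e^{\pi\sqrt{n}}/(8n)$ shows that $E(n):=R_2(n,2)/M(n)$ satisfies $|E(n)|\leq C\,n^{-1/2}e^{-\pi\sqrt{n}/2}$ for an explicit $C$ once $n$ is moderately large. The decisive gain from taking $N=2$ rather than $N=1$ is that this relative error, and hence $\log(1+E(n))$, is super-exponentially small and so is negligible against every negative power of $n$.

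Taking logarithms and expanding, I would record
\begin{equation*}
\log\overline{p}(n)=\pi\sqrt{n}-\log n-\log 8+\log\Bigl(1-\frac{1}{\pi\sqrt{n}}\Bigr)+\varepsilon(n),
\end{equation*}
where $\varepsilon(n)$ absorbs $\log(1+E(n))$ together with the $e^{-2\pi\sqrt{n}}$ correction hidden in $M(n)$ and is super-exponentially small. Writing $g(n):=\log r_\alpha(n)=\tfrac{1}{n}\bigl(\log\overline{p}(n)-\alpha\log n\bigr)$ and denoting by $G(n)$ the same expression with $\varepsilon(n)/n$ deleted, the smooth part becomes
\begin{equation*}
G(n)=\frac{\pi}{\sqrt{n}}-\frac{(1+\alpha)\log n}{n}-\frac{\log 8}{n}+\frac{1}{n}\log\Bigl(1-\frac{1}{\pi\sqrt{n}}\Bigr).
\end{equation*}
Because the second difference of $\varepsilon(n)/n$ is bounded by four times a super-exponentially small quantity, it suffices to analyze $\Delta^2 G(n-1)$.

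For the second difference I would use the exact identity $\Delta^2 G(n-1)=\int_0^1(1-t)\bigl[G''(n+t)+G''(n-t)\bigr]\,dt$, which presents $\Delta^2 G(n-1)$ as a weighted average (of total mass $1$) of the values of $G''$ on $[n-1,n+1]$. A direct computation gives
\begin{equation*}
G''(n)=\frac{3\pi}{4n^{5/2}}-\frac{(1+\alpha)(2\log n-3)}{n^{3}}-\frac{2\log 8}{n^{3}}-\frac{15}{4\pi n^{7/2}}-\cdots,
\end{equation*}
so the leading term is exactly $\tfrac{3\pi}{4}n^{-5/2}$ and every remaining term is negative past a small threshold, the largest in size being the $n^{-3}\log n$ term. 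The symmetric second difference of the leading piece $\pi n^{-1/2}$ is $\tfrac{3\pi}{4}n^{-5/2}$ up to a positive discretization term of order $n^{-9/2}$, while the second differences of the lower-order pieces stay within the extreme values of their (negative, $O(n^{-3}\log n)$) second derivatives. Since $\log\bigl(1+\tfrac{3\pi}{4}n^{-5/2}\bigr)=\tfrac{3\pi}{4}n^{-5/2}-O(n^{-5})$, the negative $n^{-3}\log n$ correction comfortably beats the $n^{-5}$ deficit and yields the upper bound. For the lower bound I would compare against $\log\bigl(1+\tfrac{3\pi}{4}n^{-5/2}-(11+5\alpha)n^{-11/4}\bigr)=\tfrac{3\pi}{4}n^{-5/2}-(11+5\alpha)n^{-11/4}-O(n^{-5})$ and reduce the claim, after clearing $n^{3}$, to an inequality of the form $(11+5\alpha)n^{1/4}>(1+\alpha)(2\log n-3)+2\log 8+\cdots$, in which an $n^{1/4}$ term must dominate a $\log n$ term.

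The main obstacle is not the structure but the fully explicit bookkeeping: every ``$\cdots$'' above must be replaced by an honest remainder carrying a numerical constant --- controlling the tail of $\log(1-\tfrac{1}{\pi\sqrt{n}})$, the $n^{-9/2}$ discretization error from the integral identity, and the super-exponential terms --- after which one must determine precisely from which $n$ onward each comparison is valid, uniformly in $\alpha\geq 0$. The three quantities appearing in $N(\alpha)$ are exactly the thresholds this step produces. The condition $\tfrac{3\pi}{4}n^{1/4}>11+5\alpha$, equivalent to $n>\bigl(\tfrac{4(11+5\alpha)}{3\pi}\bigr)^{4}$, guarantees that the argument of the lower-bound logarithm exceeds $1$; this is what makes \eqref{mainresulteqn} meaningful and, a fortiori, forces $\Delta^2\log r_\alpha(n-1)>0$, i.e.\ log-convexity. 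The remaining constants $\bigl[\tfrac{3490}{\alpha}\bigr]+2$ and $5505$ (and the single value $4522$ in the degenerate case $\alpha=0$, where the $\alpha$-dependent thresholds break down) are the crossover points at which the explicit correction inequalities first hold, the $\alpha^{-1}$ scaling reflecting that for small positive $\alpha$ the lower-order terms must be pushed further out before they drop below the budget $(11+5\alpha)n^{-11/4}$.
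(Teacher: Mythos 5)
Your proposal follows essentially the same route as the paper: extract the $k=1$ main term from Engel's formula (the paper takes $N=3$ rather than $N=2$, an immaterial difference), split $\log r_\alpha(n)$ into the smooth part $\tfrac1n\log\overline{T}(n)$, a super-exponentially small error, and the $\alpha\tfrac{\log n}{n}$ term, sandwich each second difference between values of the corresponding second derivative, and observe that the negative $n^{-3}\log n$ corrections, respectively the $(11+5\alpha)n^{-11/4}$ budget, absorb all remaining errors — which is exactly the content of the paper's Lemmas 2.2--2.6. The explicit constant-chasing you defer is precisely what the paper's proof carries out, and your reading of where each threshold in $N(\alpha)$ originates is accurate.
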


\begin{corollary}\label{cor1}
	The sequence $\bigl\{\sqrt[n]{\overline{p}(n)/n^{\alpha}}\bigr\}_{n \geq  N(\alpha)}$ is $\log$-convex.
\end{corollary}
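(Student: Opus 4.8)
The plan is to prove Theorem \ref{mainresult} by establishing a sufficiently sharp two-sided asymptotic estimate for $\log r_{\alpha}(n)$ itself, and then taking the second difference $\Delta^2$ termwise. First I would start from Zuckerman's exact formula \eqref{Zuckerman}, or more precisely from the truncated version \eqref{Engel1} with a small fixed number of terms together with the error bound \eqref{Engel2}; taking $N$ large enough in \eqref{Engel2} (or simply $N=2$ as Engel did, after checking the error is negligible at the scale we need) gives an asymptotic expansion
\begin{equation*}
\overline{p}(n) \sim \frac{1}{8n}\Bigl(1-\frac{1}{\pi\sqrt{n}}\Bigr)e^{\pi\sqrt{n}}\bigl(1+O(n^{-1/2})\bigr),
\end{equation*}
which must be refined to enough orders in $n^{-1/2}$ to control differences down to the $n^{-11/4}$ scale appearing in \eqref{mainresulteqn}. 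Writing $\overline{p}(n)=g(n)\bigl(1+\text{error}\bigr)$ for an explicit elementary main term $g(n)$, I would take logarithms to get
\begin{equation*}
\log r_{\alpha}(n) = \frac{1}{n}\log \overline{p}(n) - \frac{\alpha \log n}{n} = \frac{\pi}{\sqrt n} - \frac{\log n}{n} + \cdots,
\end{equation*}
carrying the expansion in powers of $n^{-1/2}$ and $n^{-1}\log n$ far enough that the remainder, after applying $\Delta^2$, is uniformly smaller than the gap $\tfrac{11+5\alpha}{n^{11/4}}$ between the two bounds in \eqref{mainresulteqn}.

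Next I would analyze $\Delta^2 \log r_{\alpha}(n-1) = \log r_{\alpha}(n+1) - 2\log r_{\alpha}(n) + \log r_{\alpha}(n-1)$ term by term using the expansion. The dominant contribution comes from the $\pi n^{-1/2}$ term: since $\Delta^2$ acting on $n^{-1/2}$ behaves asymptotically like the second derivative, $\tfrac{d^2}{dn^2} n^{-1/2} = \tfrac{3}{4} n^{-5/2}$, this produces exactly the leading coefficient $\tfrac{3\pi}{4 n^{5/2}}$ seen in both bounds and explains why $\tfrac{3\pi}{4}$ is the best possible constant. The strategy is thus to show that $\Delta^2 \log r_{\alpha}(n-1)$ equals $\tfrac{3\pi}{4n^{5/2}}$ plus a correction that is negative, of size at most $\tfrac{11+5\alpha}{n^{11/4}}$, and at least $0$ after accounting for the $\alpha$-dependent term $-\tfrac{\alpha \log n}{n}$ whose second difference contributes at the $n^{-5/2}\log n$ / $n^{-3}$ scale. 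Finally I would pass from the estimate on $\Delta^2 \log r_{\alpha}(n-1)$ to the logarithmic form in \eqref{mainresulteqn} using $\log(1+x) = x + O(x^2)$, noting that the quadratic corrections are of order $n^{-5}$ and hence absorbed into the stated error; the explicit thresholds in $N(\alpha)$ are exactly the values of $n$ beyond which all the tail bounds and the $O$-constants can be made rigorous and the inequalities strict.

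The main obstacle will be making every step fully effective rather than merely asymptotic: the theorem asserts strict inequalities valid for all $n \geq N(\alpha)$ with completely explicit $N(\alpha)$, so each $O(\cdot)$ in the heuristic above must be replaced by an explicit constant and an explicit range of validity. Concretely, the hard part is bounding the remainder after a finite Taylor/asymptotic expansion of $\log \overline{p}(n)$ uniformly in $n$, then controlling how $\Delta^2$ interacts with that remainder—because $\Delta^2$ applied to a slowly varying function is comparable to a second derivative, one needs mean-value-type estimates on the second derivative of the remainder over the interval $[n-1,n+1]$, and these must be sharp enough that the net correction stays trapped strictly between $-\tfrac{11+5\alpha}{n^{11/4}}$ and $0$. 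Tracking the $\alpha$-dependence carefully is what forces the three-way maximum defining $N(\alpha)$: the term $\bigl[\tfrac{3490}{\alpha}\bigr]+2$ guarantees the $\alpha\log n/n$ contribution is dominated, the ceiling term $\bigl\lceil(4(11+5\alpha)/3\pi)^4\bigr\rceil$ guarantees the lower bound's subtracted term $\tfrac{11+5\alpha}{n^{11/4}}$ does not overwhelm the leading $\tfrac{3\pi}{4n^{5/2}}$, and the constant $5505$ (resp.\ $4522$) secures the baseline tail estimates. Corollary \ref{cor1} is then immediate: since $\tfrac{3\pi}{4n^{5/2}} - \tfrac{11+5\alpha}{n^{11/4}} > 0$ for $n \geq N(\alpha)$ by the ceiling threshold, the lower bound in \eqref{mainresulteqn} gives $\Delta^2 \log r_{\alpha}(n-1) > \log(1) = 0$, which is precisely the statement that $\{\sqrt[n]{\overline{p}(n)/n^{\alpha}}\}_{n \geq N(\alpha)}$ is log-convex.
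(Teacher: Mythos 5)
Your derivation of Corollary \ref{cor1} is exactly the paper's: the lower bound in \eqref{mainresulteqn} together with the observation that $\frac{3\pi}{4n^{5/2}}-\frac{11+5\alpha}{n^{11/4}}>0$ for $n\geq N(\alpha)$ (forced by the ceiling term in $N(\alpha)$) gives $\Delta^2\log r_{\alpha}(n-1)>\log 1=0$, i.e.\ log-convexity. Your sketched route to Theorem \ref{mainresult} (main term plus controlled error, second differences compared to second derivatives via a mean-value argument, everything made effective) is also the strategy the paper actually carries out, so the proposal is correct and essentially the same approach.
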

\begin{proof}
From \eqref{mainresulteqn}, it is immediate that $$\dfrac{r_{\alpha}(n+1)r_{\alpha}(n-1)}{r^2_{\alpha}(n)} > 1+\frac{3\pi}{4n^{5/2}}-\frac{11+5\alpha}{n^{11/4}}\ \ \text{for all}\ \ n \geq N(\alpha).$$
We finish the proof by observing that $$1+\frac{3\pi}{4n^{5/2}}-\frac{11+5\alpha}{n^{11/4}} >1\ \ \text{for all}\ \ n \geq N(\alpha).$$	
\end{proof}

\begin{corollary}\label{cor2}
	The sequences $\bigl\{\sqrt[n]{\overline{p}(n)/n}\bigr\}_{n \geq 19}$  and $\bigl\{\sqrt[n]{\overline{p}(n)}\bigr\}_{n \geq 4}$ are $\log$-convex.
\end{corollary}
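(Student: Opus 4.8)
The plan is to split each sequence into an asymptotic tail, handled by Corollary \ref{cor1}, and a finite initial segment, handled by direct computation. Recall that for a positive sequence, log-convexity at the triple $(n-1,n,n+1)$ is exactly the assertion $r_\alpha(n)^2 \le r_\alpha(n-1)\,r_\alpha(n+1)$, which is equivalent to $\Delta^2 \log r_\alpha(n-1) \ge 0$. Thus both claims reduce to verifying $\Delta^2 \log r_\alpha(n-1) > 0$ for every admissible $n$, with $\alpha = 1$ in the first case and $\alpha = 0$ in the second.

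For the tail, I would specialize the threshold $N(\alpha)$ of Theorem \ref{mainresult}. When $\alpha = 1$ one computes $[3490/1]+2 = 3492$ and $\lceil (64/(3\pi))^4 \rceil = 2127$, so $N(1) = \max\{3492, 2127, 5505\} = 5505$; when $\alpha = 0$ the definition gives $N(0) = 4522$ outright. By Corollary \ref{cor1} (or directly from the lower bound in \eqref{mainresulteqn}, which the proof of Corollary \ref{cor1} shows exceeds $1$), the sequence $\{r_1(n)\}$ is log-convex for all $n \ge 5505$ and the sequence $\{r_0(n)\}$ is log-convex for all $n \ge 4522$.

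It then remains to dispose of the finitely many triples below these thresholds. Concretely, I would check $r_1(n)^2 < r_1(n-1)\,r_1(n+1)$ for $20 \le n \le 5504$ and $r_0(n)^2 < r_0(n-1)\,r_0(n+1)$ for $5 \le n \le 4521$; this is where the specific starting indices $19$ and $4$ enter, the claim being that log-convexity first sets in at those points. The values $\overline{p}(n)$ needed here are exact integers, obtainable from the generating function $\sum_{n\ge0}\overline{p}(n)q^n = \prod_{k\ge1}(1+q^k)/(1-q^k)$ or from a standard convolution recurrence, so each inequality is a finite, decidable comparison.

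The genuine obstacle is numerical rather than conceptual: by the very content of Theorem \ref{mainresult} the quantity $\Delta^2 \log r_\alpha(n-1)$ is positive but of order $3\pi/(4n^{5/2})$, which near the upper end of the finite range ($n$ of size a few thousand) is already as small as $10^{-9}$. Certifying strict positivity therefore demands that the $n$-th roots be evaluated to sufficiently many guard digits, or better, via rigorous interval arithmetic, so that rounding cannot corrupt the sign. Since $\overline{p}(n)$ is known exactly, taking logarithms and bounding $\tfrac{1}{n}\log\overline{p}(n) - \tfrac{\alpha}{n}\log n$ with certified error intervals reduces the whole finite check to a finite set of interval inequalities, completing the proof.
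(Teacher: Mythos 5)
Your proposal follows exactly the paper's route: invoke Corollary \ref{cor1} (with $N(1)=5505$ and $N(0)=4522$, which you compute correctly) for the tail, and verify the finitely many remaining triples by direct computation with the exact values of $\overline{p}(n)$. The paper's own proof is precisely this two-step reduction, with the finite check carried out in Mathematica; your added remarks on certified precision for the finite check are sensible but do not change the argument.
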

\begin{proof}
In order to prove $\bigl\{\sqrt[n]{\overline{p}(n)/n}\bigr\}_{n \geq 19}$  and $\bigl\{\sqrt[n]{\overline{p}(n)}\bigr\}_{n \geq 4}$ are $\log$-convex, after corollary \ref{cor1}, it remains to check numerically for $19\leq n \leq 5504$ and $4 \leq n \leq 4521$, which is done in `Mathematica' interface. 	
\end{proof}

\begin{corollary}\label{cor3}
	For all $n \geq 2$, we have
	\begin{equation}\label{cor3eqn}
	\dfrac{\sqrt[n]{\overline{p}(n)}}{\sqrt[n+1]{\overline{p}(n+1)}}\biggl(1+\frac{3\pi}{4n^{5/2}}\biggr) > \dfrac{\sqrt[n-1]{\overline{p}(n-1)}}{\sqrt[n]{\overline{p}(n)}}.
	\end{equation}
\end{corollary}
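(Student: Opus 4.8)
The plan is to reduce \eqref{cor3eqn} to the upper bound in \eqref{mainresulteqn} at the single value $\alpha=0$, after which only a finite numerical check remains. Writing $r_0(n)=\sqrt[n]{\overline{p}(n)}$ in the notation introduced before Theorem \ref{mainresult}, the claim \eqref{cor3eqn} becomes
\begin{equation*}
\frac{r_0(n)}{r_0(n+1)}\left(1+\frac{3\pi}{4n^{5/2}}\right) > \frac{r_0(n-1)}{r_0(n)}.
\end{equation*}

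First I would take logarithms of both sides; this is legitimate since $r_0(n)$, $r_0(n\pm 1)$ and the bracketed factor are all positive. Transposing the two $r_0$-terms $\log r_0(n)-\log r_0(n+1)$ from the left to the right-hand side collapses the inequality into
\begin{equation*}
\log r_0(n+1) - 2\log r_0(n) + \log r_0(n-1) < \log\left(1+\frac{3\pi}{4n^{5/2}}\right),
\end{equation*}
and the left-hand side is by definition $\Delta^{2}\log r_0(n-1)$, with $\Delta$ the forward difference in $n$. Thus \eqref{cor3eqn} is precisely the upper estimate of \eqref{mainresulteqn} specialized to $\alpha=0$.

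It then suffices to combine two ingredients. For $n \geq N(0)=4522$, the upper bound of Theorem \ref{mainresult} with $\alpha=0$ delivers the inequality at once. For the finitely many remaining values $2 \leq n \leq 4521$, I would verify \eqref{cor3eqn} directly by computing the relevant overpartition numbers, exactly as in the proof of Corollary \ref{cor2}. There is no genuine analytic obstacle here: all the difficulty has already been absorbed into Theorem \ref{mainresult}, and the only task left is the bookkeeping of the finite initial segment together with the elementary observation that the logarithmic reformulation is an equivalence.
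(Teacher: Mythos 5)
Your reduction of \eqref{cor3eqn} to the upper bound of \eqref{mainresulteqn} at $\alpha=0$, followed by a finite numerical verification of the initial segment, is exactly the paper's own argument (the paper likewise invokes Theorem \ref{mainresult} and checks $2\leq n\leq 4522$ in Mathematica). The logarithmic rearrangement identifying the claim with $\Delta^{2}\log r_0(n-1)<\log\bigl(1+\tfrac{3\pi}{4n^{5/2}}\bigr)$ is correct, so nothing further is needed.
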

\begin{proof}
It is an immediate implication of \eqref{mainresulteqn} as it is only left over to verify \eqref{cor3eqn} for $ 2\leq n \leq 4522$, which we did numerically in `Mathematica'.	
\end{proof}

\begin{corollary}\label{cor4}
	\begin{equation}\label{cor4eqn}
	\lim_{n \rightarrow \infty} n^{5/2} \Delta^{2} \log r_{\alpha}(n) = \frac{3 \pi}{4}.
	\end{equation}
\end{corollary}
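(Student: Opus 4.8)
The plan is to derive Corollary \ref{cor4} directly from the two-sided bound \eqref{mainresulteqn} of Theorem \ref{mainresult} by a squeeze argument, after aligning the index. Since \eqref{mainresulteqn} controls $\Delta^{2}\log r_{\alpha}(n-1)$ whereas the corollary concerns $\Delta^{2}\log r_{\alpha}(n)$, I would first replace $n$ by $n+1$ throughout \eqref{mainresulteqn}. This is legitimate for all $n \geq N(\alpha)-1$, and yields
\begin{equation*}
\log \biggl(1+\frac{3\pi}{4(n+1)^{5/2}}-\frac{11+5\alpha}{(n+1)^{11/4}}\biggr) < \Delta^{2}\log r_{\alpha}(n) < \log \biggl(1+\frac{3\pi}{4(n+1)^{5/2}}\biggr).
\end{equation*}

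Next I would multiply the entire chain of inequalities by $n^{5/2} > 0$, preserving the inequalities, so that $n^{5/2}\,\Delta^{2}\log r_{\alpha}(n)$ is trapped between $n^{5/2}$ times the lower log-expression and $n^{5/2}$ times the upper log-expression. The key analytic input is the elementary asymptotic $\log(1+x) \sim x$ as $x \to 0$: writing each bounding term as $n^{5/2}\log(1+a_n) = \bigl(n^{5/2}a_n\bigr)\cdot\frac{\log(1+a_n)}{a_n}$, where $a_n \to 0$, the second factor tends to $1$, so it remains only to evaluate $\lim_{n\to\infty} n^{5/2}a_n$ in each case.

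For the upper bound, $a_n = \frac{3\pi}{4(n+1)^{5/2}}$, so $n^{5/2}a_n = \frac{3\pi}{4}\bigl(\frac{n}{n+1}\bigr)^{5/2} \to \frac{3\pi}{4}$. For the lower bound, the only difference is the extra summand $-\frac{11+5\alpha}{(n+1)^{11/4}}$; upon multiplication by $n^{5/2}$ this contributes a term of order $n^{5/2-11/4} = n^{-1/4} \to 0$, exploiting precisely that $11/4 > 5/2$. Hence the lower bound also tends to $\frac{3\pi}{4}$. Since $\Delta^{2}\log r_{\alpha}(n)$ is squeezed between two sequences both converging to $\frac{3\pi}{4}$, the squeeze theorem gives \eqref{cor4eqn}. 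There is no genuine obstacle here: the statement is a routine consequence of the effective inequality \eqref{mainresulteqn}, and the only point requiring a moment's care is confirming that the correction term's exponent $11/4$ strictly exceeds $5/2$, so that the lower bound's extra contribution vanishes in the limit. This is exactly the sense in which \eqref{mainresulteqn} already encodes that $\tfrac{3\pi}{4}$ is the best possible constant.
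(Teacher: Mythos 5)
Your proposal is correct and is essentially the paper's own argument: the paper also multiplies the two-sided bound \eqref{mainresulteqn} by $n^{5/2}$ and passes to the limit, and your version merely spells out the details (the index shift from $n-1$ to $n$, the $\log(1+x)\sim x$ expansion, and the observation that the correction term is $O(n^{-1/4})$) that the paper leaves implicit.
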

\begin{proof}
Multiplying both side of \eqref{mainresulteqn} by $n^{5/2}$ and taking limit as $n$ tends to infinity, we get \eqref{cor4eqn}.
\end{proof}

\section{Proof of Theorem \ref{mainresult}}\label{mainsec}
In this section, we give a proof of Theorem \ref{mainresult}. First, we state the Lemma 2.1 \cite[Lemma 2.1]{Chen3} of Chen and Zheng which will be useful in the proofs of Lemmas \ref{Lemma2}-\ref{Lemma4}. These lemmas further direct to get upper bound and lower bound of $\Delta^{2} \log r_{\alpha}(n)$ respectively in Lemma \ref{Lemma5} and \ref{Lemma6}, finally results \eqref{mainresulteqn}.\\

\begin{lemma}\label{Lemma1} \cite[Lemma 2.1]{Chen3}
	Suppose $f(x)$ has a continuous second derivative for $x \in [n-1,n+1]$. Then there exists $c \in (n-1,n+1)$ such that
	\begin{equation}
	\Delta^2 f(n-1)=f(n+1)+f(n-1)-2f(n)=f''(c).
	\end{equation}
	If $f(x)$ has an increasing second derivative, then 
	\begin{equation}
	f''(n-1)<\Delta^2 f(n-1) < f''(n+1).
	\end{equation}
	Conversely, if $f(x)$ has a decreasing second derivative, then 
	\begin{equation}
	f''(n+1)< \Delta^2 f(n-1) < f''(n-1).
	\end{equation}
	\end{lemma}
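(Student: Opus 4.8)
The statement is the classical mean value theorem for second (symmetric) differences, so the plan is to first establish the exact identity $\Delta^2 f(n-1)=f''(c)$ for some $c\in(n-1,n+1)$, after which the two monotonicity assertions fall out immediately. The point to be careful about is obtaining the coefficient \emph{exactly} equal to $1$: a naive application of the mean value theorem to each of the two first differences $f(n+1)-f(n)$ and $f(n)-f(n-1)$ only produces $\Delta^2 f(n-1)=(\xi_1-\xi_2)f''(c)$ with an uncontrolled factor $\xi_1-\xi_2$, so a more symmetric argument is required.

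First I would introduce the auxiliary function
\begin{equation*}
\psi(t)=f(n+t)+f(n-t)-2f(n)-t^2\,\Delta^2 f(n-1),\qquad t\in[0,1],
\end{equation*}
which is twice continuously differentiable on $[0,1]$ since $f\in C^2[n-1,n+1]$. By construction $\psi(0)=0$, and the choice of the coefficient of $t^2$ forces $\psi(1)=f(n+1)+f(n-1)-2f(n)-\Delta^2 f(n-1)=0$ as well. Next I would apply Rolle's theorem twice. As $\psi(0)=\psi(1)=0$, there is $t_1\in(0,1)$ with $\psi'(t_1)=0$, where $\psi'(t)=f'(n+t)-f'(n-t)-2t\,\Delta^2 f(n-1)$. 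The symmetry of $\psi$ gives $\psi'(0)=f'(n)-f'(n)=0$, so $\psi'$ vanishes at both $0$ and $t_1$; Rolle's theorem applied to $\psi'$ then yields $t_2\in(0,t_1)$ with $\psi''(t_2)=0$. Computing $\psi''(t)=f''(n+t)+f''(n-t)-2\,\Delta^2 f(n-1)$ and evaluating at $t_2$ gives
\begin{equation*}
\Delta^2 f(n-1)=\tfrac12\bigl(f''(n+t_2)+f''(n-t_2)\bigr).
\end{equation*}

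To conclude the existence claim I would invoke the intermediate value theorem: $f''$ is continuous on the closed interval $[\,n-t_2,\,n+t_2\,]$, so the average of its two endpoint values lies between its minimum and maximum there and is therefore attained as $f''(c)$ for some $c$ in that interval, which is contained in $(n-1,n+1)$. This proves $\Delta^2 f(n-1)=f''(c)$.

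The remaining two parts are then immediate corollaries. Since $c\in(n-1,n+1)$, if $f''$ is strictly increasing then $f''(n-1)<f''(c)<f''(n+1)$, and if $f''$ is strictly decreasing then $f''(n+1)<f''(c)<f''(n-1)$; substituting $\Delta^2 f(n-1)=f''(c)$ gives the two displayed inequality chains. I do not anticipate a serious obstacle: the only delicate point is the symmetric Rolle construction that pins the coefficient to exactly $1$, together with reading ``increasing''/``decreasing'' as strict so that the strict inequalities in the second and third assertions are justified.
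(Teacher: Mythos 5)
Your proof is correct. Note first that the paper itself gives no proof of this lemma: it is quoted verbatim from Chen and Zheng \cite[Lemma 2.1]{Chen3} and used as a black box, so there is no in-paper argument to compare against. Your symmetric Rolle construction is a standard and complete way to establish the exact identity $\Delta^2 f(n-1)=f''(c)$: the auxiliary function $\psi(t)=f(n+t)+f(n-t)-2f(n)-t^2\,\Delta^2 f(n-1)$ satisfies $\psi(0)=\psi(1)=\psi'(0)=0$, two applications of Rolle give $\psi''(t_2)=0$, and the intermediate value theorem converts the resulting average $\tfrac12\bigl(f''(n+t_2)+f''(n-t_2)\bigr)$ into a single value $f''(c)$ with $c\in[n-t_2,n+t_2]\subset(n-1,n+1)$. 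You are also right to flag the one genuine delicacy in the statement: the strict inequalities in the second and third displays require ``increasing''/``decreasing'' to be read strictly (or at least require $f''$ to be nonconstant near the endpoints), which is how the lemma is applied later in the paper, where all the relevant $\overline{g}''_i$ are strictly monotone. An equivalent and slightly shorter route, for the record, is the integral representation $\Delta^2 f(n-1)=\int_0^1\!\!\int_0^1 f''(n-1+s+t)\,ds\,dt$ followed by the mean value theorem for integrals; it yields the same conclusion with the same hypotheses.
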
 
We start by laying out a brief outline of Engel's primary set up \cite{Engel} for proving $\log$-concavity of $\{\overline{p}(n)\}_{n \geq 2}$. Setting $N=3$ in \eqref{Engel1}, we express $\overline{p}(n)$ as
\begin{equation}\label{overpartspliteqn}
\overline{p}(n) = \overline{T}(n)+\overline{R}(n),
\end{equation}
where
\begin{eqnarray}\label{overpartdecompeqn1}
\overline{T}(n)&=& \dfrac{\overline{c}}{\overline{\mu}(n)^2}\biggl(1-\dfrac{1}{\overline{\mu}(n)}\biggr)e^{\overline{\mu}(n)},\\
\label{overpartdecompeqn2}
\overline{R}(n) &=& \dfrac{1}{8n}\biggl(1+\dfrac{1}{\overline{\mu}(n)}\biggr)e^{-\overline{\mu}(n)}+R_{2}(n,3)
\end{eqnarray}
with $\overline{c}=\dfrac{\pi^2}{8}$ and $\overline{\mu}(n)=\pi \sqrt{n}$. In order to estimate the upper and lower bound of $\Delta^{2} \log r_{\alpha}(n-1)$, it is necessary for us to express $\Delta^{2} \log r_{\alpha}(n-1)$ in the following form
\begin{eqnarray}\label{expressioneqn}
\Delta^{2} \log r_{\alpha}(n-1) &=& \Delta^2 \dfrac{1}{n-1} \log \overline{p}(n-1) - \alpha \ \Delta^2 \dfrac{1}{n-1} \log (n-1)\nonumber\\
&=& \Delta^2 \dfrac{1}{n-1} \log \overline{T}(n-1) + \Delta^2 \dfrac{1}{n-1} \log \biggl(1+ \dfrac{\overline{R}(n-1)}{\overline{T}(n-1)}\biggr)- \alpha \ \Delta^2 \dfrac{1}{n-1} \log (n-1).\nonumber\\ 
\end{eqnarray}
Define
\begin{equation}\label{def}
\overline{E}(n-1) = \log \biggl(1+ \dfrac{\overline{R}(n-1)}{\overline{T}(n-1)}\biggr)
\end{equation}
 and rewrite \eqref{expressioneqn} as
\begin{equation}\label{expressionfinalform}
\Delta^{2} \log r_{\alpha}(n-1) = \Delta^2 \dfrac{1}{n-1} \log \overline{T}(n-1) + \Delta^2 \dfrac{1}{n-1} \overline{E}(n-1)- \alpha \ \Delta^2 \dfrac{1}{n-1} \log (n-1)
\end{equation}
Therefore, in order to estimate $\Delta^{2} \log r_{\alpha}(n-1)$, it is sufficient to estimate each of the three factors, appearing on the right hand side of \eqref{expressionfinalform}. 
\begin{lemma}\label{Lemma2}
	Let 
	\begin{eqnarray}
	\overline{G}_{1}(n) &=& \dfrac{3\pi}{4(n+1)^{5/2}}-\dfrac{5 \log \overline{\mu}(n-1)}{(n-1)^3},\\
	\overline{G}_{2}(n) &=&\dfrac{3\pi}{4(n-1)^{5/2}}-\dfrac{3 \log \overline{\mu}(n+1)}{(n+1)^3}+\dfrac{4}{(n-1)^3}.
	\end{eqnarray}
Then for $n \geq 2$, we have
\begin{equation}\label{lem2eqn1}
\overline{G}_{1}(n) < \Delta^2 \dfrac{1}{n-1} \log \overline{T}(n-1) < \overline{G}_{2}(n).
\end{equation}
\end{lemma}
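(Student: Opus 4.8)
The plan is to apply Lemma \ref{Lemma1} to the function $f(x):=\frac{1}{x}\log\overline{T}(x)$, so that $\Delta^2\frac{1}{n-1}\log\overline{T}(n-1)=f''(c)$ for some $c\in(n-1,n+1)$, and then to bound $f''(c)$ uniformly on that interval. First I would make the leading behaviour explicit. Using \eqref{overpartdecompeqn1} together with $\overline{c}=\pi^2/8$ and $\overline{\mu}(x)=\pi\sqrt{x}$, one gets $\log\overline{T}(x)=\pi\sqrt{x}-\log(8x)+\log\bigl(1-\frac{1}{\pi\sqrt{x}}\bigr)$, so that $f=\phi+h$ with $\phi(x):=\pi x^{-1/2}$ and $h(x):=\frac{1}{x}\bigl(-\log(8x)+\log(1-\frac{1}{\pi\sqrt{x}})\bigr)$. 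The term $\phi$ carries the main contribution: since $\phi''(x)=\frac{3\pi}{4}x^{-5/2}$ is positive and strictly decreasing, evaluating at $c\in(n-1,n+1)$ gives $\frac{3\pi}{4(n+1)^{5/2}}<\phi''(c)<\frac{3\pi}{4(n-1)^{5/2}}$, which already produces the two leading terms of $\overline{G}_{1}(n)$ and $\overline{G}_{2}(n)$.

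The remaining work is to control $h''(c)$ and show it lies between $-\frac{5\log\overline{\mu}(n-1)}{(n-1)^3}$ and $-\frac{3\log\overline{\mu}(n+1)}{(n+1)^3}+\frac{4}{(n-1)^3}$. Differentiating gives $h''(x)=\frac{3-2\log(8x)}{x^3}+v''(x)$, where $v(x):=\frac{1}{x}\log(1-\frac{1}{\pi\sqrt{x}})$. Since $\log(8x)=2\log\overline{\mu}(x)+\log\frac{8}{\pi^2}$, the dominant piece of $h''$ is $-\frac{4\log\overline{\mu}(x)}{x^3}$, and the target coefficients $5$ and $3$ bracket this natural coefficient $4$: the extra $\pm\frac{\log\overline{\mu}(x)}{x^3}$ of room is precisely what absorbs, below and above respectively, the constant term $\frac{3-2\log(8/\pi^2)}{x^3}$ and the contribution of $v''$. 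For $v''$ I would use that for $x\geq 2$ the quantity $\frac{1}{\pi\sqrt{x}}$ stays bounded away from $1$, so, writing $w(x)=\log(1-\frac{1}{\pi\sqrt{x}})$ and $v''(x)=\frac{w''(x)}{x}-\frac{2w'(x)}{x^2}+\frac{2w(x)}{x^3}$, each summand is $O(x^{-7/2})$ and hence negligible against $\frac{\log\overline{\mu}(x)}{x^3}$; the net positive slack is what the additive $\frac{4}{(n-1)^3}$ in $\overline{G}_{2}(n)$ accounts for. Because $\frac{\log\overline{\mu}(x)}{x^3}$ is decreasing on $[1,\infty)$, replacing $c$ by $n-1$ in the lower estimate and by $n+1$ in the upper estimate converts the interval bounds into the stated one-sided bounds on $h''(c)$.

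Adding the estimates for $\phi''(c)$ and $h''(c)$ then yields \eqref{lem2eqn1}. I expect the main obstacle to be the bookkeeping in the second step: $h''$ is not monotone, since its dominant $x^{-3}\log x$ term decreases but competes with $x^{-3}$ and $x^{-7/2}$ pieces of mixed sign, so one cannot simply invoke the monotone form of Lemma \ref{Lemma1} for $h$ and must instead bound the supremum and infimum of $h''$ over $(n-1,n+1)$ by hand. The delicate point is checking that these crude uniform bounds are tight enough that the chosen coefficients $5$, $3$ and the slack $\frac{4}{(n-1)^3}$ genuinely dominate the error all the way down to $n=2$, which may force a separate direct verification for the smallest values of $n$.
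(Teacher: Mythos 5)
Your proposal is correct and follows the same overall architecture as the paper's proof: isolate $\pi x^{-1/2}$ as the leading term of $\frac{1}{x}\log\overline{T}(x)$, note that its second derivative $\frac{3\pi}{4}x^{-5/2}$ is decreasing so that Lemma \ref{Lemma1} yields the bracketing $\frac{3\pi}{4(n+1)^{5/2}}<\cdot<\frac{3\pi}{4(n-1)^{5/2}}$, fit the remainder between $-\frac{5\log\overline{\mu}(n-1)}{(n-1)^3}$ and $-\frac{3\log\overline{\mu}(n+1)}{(n+1)^3}+\frac{4}{(n-1)^3}$, and verify small $n$ numerically. The one point of genuine difference is exactly the step you flag as the main obstacle. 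Rather than lumping the remainder into a single function $h$ whose second derivative is not monotone and then bounding $\sup$ and $\inf$ of $h''$ over $(n-1,n+1)$ by hand, the paper decomposes $\frac{1}{x}\log\overline{T}(x)$ into the four summands $\frac{\overline{\mu}(x)}{x}$, $-\frac{3\log\overline{\mu}(x)}{x}$, $\frac{\log(\overline{\mu}(x)-1)}{x}$ and $\frac{\log\overline{c}}{x}$, checks the sign of each third derivative to see that every summand has a monotone second derivative for $x\geq 3$, and applies the monotone form of Lemma \ref{Lemma1} to each piece separately (with its own intermediate point). That finer splitting buys a purely mechanical remainder analysis --- everything reduces to comparing explicit expressions evaluated at $n\pm1$, valid for $n\geq 15$, with $2\leq n\leq 14$ checked by computer --- whereas your coarser grouping forces the ad hoc uniform estimates you describe. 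Your estimates do go through (the constant $3-2\log(8/\pi^2)>0$ and the $O(x^{-7/2})$ size of $v''$ give enough slack against the $\pm\frac{\log\overline{\mu}}{x^3}$ margin between the coefficients $3$, $4$, $5$), so both routes land in the same place; the paper's is simply less delicate to write down.
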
 
\begin{proof}
Using the definition of $\overline{T}(n)$ \eqref{overpartdecompeqn1}, we write
\begin{equation}\label{lem2eqn2}
\Delta^2 \dfrac{1}{n-1} \log \overline{T}(n-1) = \sum_{i=1}^{4} \Delta^2 \ \overline{g}_{i}(n-1),
\end{equation}	
where
\begin{eqnarray}
\overline{g}_1(n) &=& \dfrac{\overline{\mu}(n)}{n},\nonumber\\
\overline{g}_2(n) &=& -\dfrac{3 \log \ \overline{\mu}(n)}{n},\nonumber\\
\overline{g}_3(n) &=& \dfrac{\log \ (\overline{\mu}(n)-1)}{n},\nonumber\\
\text{and} \ \ \overline{g}_4(n) &=& \dfrac{\log \overline{c}}{n}.\nonumber
\end{eqnarray}
It can be easily checked that for $n \geq 3$, $\overline{g}^{'''}_1(n)<0$, $\overline{g}^{'''}_2(n)>0$, $\overline{g}^{'''}_3(n)<0$, and $\overline{g}^{'''}_4(n)<0$. As a consequence, for $n \geq 3$, $\overline{g}^{''}_1(n), \overline{g}^{''}_3(n)$, and $\overline{g}^{''}_4(n)$ are decreasing, whereas $\overline{g}^{''}_2(n)$ is increasing. Applying Lemma \ref{Lemma1}, we get for $i \in \{1,3,4\}$,
\begin{equation}\label{lem2eqn3}
\overline{g}^{''}_i(n+1) < \Delta^2 \ \overline{g}_{i}(n-1) < \overline{g}^{''}_i(n-1)
\end{equation}
and 
\begin{equation}\label{lem2eqn4}
\overline{g}^{''}_2(n-1) < \Delta^2 \ \overline{g}_{2}(n-1) < \overline{g}^{''}_2(n+1).
\end{equation}
From \eqref{lem2eqn2} and \eqref{lem2eqn3}-\eqref{lem2eqn4}, we obtain for all $n \geq 3$,
\begin{equation}\label{lem2eqn5}
\Delta^2 \dfrac{1}{n-1} \log \overline{T}(n-1) < \overline{g}^{''}_1(n-1)+\overline{g}^{''}_2(n+1)+\overline{g}^{''}_3(n-1)+\overline{g}^{''}_4(n-1)
\end{equation}
and 
\begin{equation}\label{lem2eqn6}
\Delta^2 \dfrac{1}{n-1} \log \overline{T}(n-1) > \overline{g}^{''}_1(n+1)+\overline{g}^{''}_2(n-1)+\overline{g}^{''}_3(n+1)+\overline{g}^{''}_4(n+1),
\end{equation}
where 
\begin{eqnarray}\label{lem2eqn7}
\overline{g}^{''}_1(n) & =& \dfrac{3\pi}{4n^{5/2}},\\
\label{lem2eqn8}
\overline{g}^{''}_2(n) & =& \dfrac{9}{2n^3}-\dfrac{6\log \overline{\mu}(n)}{n^3},\\
\label{lem2eqn9}
\overline{g}^{''}_3(n) & =& \dfrac{2 \log (\overline{\mu}(n)-1)}{n^3}-\dfrac{5\pi}{4n^{5/2}(\overline{\mu}(n)-1)}-\dfrac{\pi^2}{4n^2 (\overline{\mu}(n)-1)^2},\\
\label{lem2eqn10}
\text{and}\ \ \overline{g}^{''}_4(n) & =& \dfrac{2 \log \overline{c}}{n^3}.
\end{eqnarray}
We first estimate the upper bound of $\Delta^2 \dfrac{1}{n-1} \log \overline{T}(n-1)$ by \eqref{lem2eqn5} and \eqref{lem2eqn7}-\eqref{lem2eqn10}. 
\begin{equation}\label{lem2eqn11}
\begin{split}
\Delta^2 \dfrac{1}{n-1} \log \overline{T}(n-1) <& \dfrac{3\pi}{4(n-1)^{5/2}}+\dfrac{9}{2(n+1)^3}-\dfrac{6\log \overline{\mu}(n+1)}{(n+1)^3}\\
&+\dfrac{2 \log (\overline{\mu}(n-1)-1)}{(n-1)^3}-\dfrac{5\pi}{4(n-1)^{5/2}(\overline{\mu}(n-1)-1)}-\dfrac{\pi^2}{4(n-1)^2 (\overline{\mu}(n-1)-1)^2}\\
& +\dfrac{2 \log \overline{c}}{(n-1)^3}\\
&=\dfrac{3\pi}{4(n-1)^{5/2}}+\overline{U}_{1}(n)+\overline{U}_{2}(n),
\end{split} 
\end{equation}
where
\begin{eqnarray}\label{lem2eqn12}
\overline{U}_{1}(n) &=& -\dfrac{6\log \overline{\mu}(n+1)}{(n+1)^3}+\dfrac{2 \log (\overline{\mu}(n-1)-1)}{(n-1)^3}\\
\label{lem2eqn13}
\text{and}\ \ \overline{U}_{2}(n) &=& \dfrac{9}{2(n+1)^3}-\dfrac{5\pi}{4(n-1)^{5/2}(\overline{\mu}(n-1)-1)}-\dfrac{\pi^2}{4(n-1)^2 (\overline{\mu}(n-1)-1)^2}+\dfrac{2 \log \overline{c}}{(n-1)^3}.\nonumber\\
\end{eqnarray}
It can be easily check that for all $n \geq 2$,
\begin{equation}\label{lem2eqn14}
\overline{U}_{2}(n) < \dfrac{4}{(n-1)^3}.
\end{equation}
For an upper bound of $\overline{U}_{1}(n)$, we observe that for all $n \geq 15$,
\begin{equation}\label{lem2eqn15}
\dfrac{2}{(n-1)^3}< \dfrac{3}{(n+1)^3} \ \ \text{and}\ \  \log (\overline{\mu}(n)-1) < \log \overline{\mu}(n+1),
\end{equation}
that is,
\begin{equation}\label{lem2eqn16}
\dfrac{2 \log (\overline{\mu}(n-1)-1)}{(n-1)^3} < \dfrac{3 \log \overline{\mu}(n+1)}{(n+1)^3}.
\end{equation}
Consequently for $n \geq 15$ we get,
\begin{equation}\label{lem2eqn17}
\overline{U}_{1}(n) < -\dfrac{3\log \overline{\mu}(n+1)}{(n+1)^3}
\end{equation}
Invoking \eqref{lem2eqn14} and \eqref{lem2eqn17} into \eqref{lem2eqn11}, we have for $n \geq 15$,
\begin{equation}\label{lem2eqn18}
\Delta^2 \dfrac{1}{n-1} \log \overline{T}(n-1) < \dfrac{3\pi}{4(n-1)^{5/2}}-\dfrac{3\log \overline{\mu}(n+1)}{(n+1)^3}+\dfrac{4}{(n-1)^3}=\overline{G}_2(n).
\end{equation}
For lower bound of $\Delta^2 \dfrac{1}{n-1} \log \overline{T}(n-1)$, using \eqref{lem2eqn6} and \eqref{lem2eqn7}-\eqref{lem2eqn10} we obtain
\begin{equation}\label{lem2eqn19}
\begin{split}
\Delta^2 \dfrac{1}{n-1} \log \overline{T}(n-1) > & \dfrac{3\pi}{4(n+1)^{5/2}}+\dfrac{9}{2(n-1)^3}-\dfrac{6\log \overline{\mu}(n-1)}{(n-1)^3}\\
&+\dfrac{2 \log (\overline{\mu}(n+1)-1)}{(n+1)^3}-\dfrac{5\pi}{4(n+1)^{5/2}(\overline{\mu}(n+1)-1)}-\dfrac{\pi^2}{4(n+1)^2 (\overline{\mu}(n+1)-1)^2}\\
& +\dfrac{2 \log \overline{c}}{(n+1)^3}\\
&=\dfrac{3\pi}{4(n+1)^{5/2}}+\overline{L}_{1}(n)+\overline{L}_{2}(n),
\end{split}
\end{equation}
where
\begin{eqnarray}\label{lem2eqn20}
\overline{L}_{1}(n) &=& -\dfrac{6\log \overline{\mu}(n-1)}{(n-1)^3}+\dfrac{2 \log (\overline{\mu}(n+1)-1)}{(n+1)^3}\\
\label{lem2eqn21}
\text{and}\ \ \overline{L}_{2}(n) &=& \dfrac{9}{2(n-1)^3}-\dfrac{5\pi}{4(n+1)^{5/2}(\overline{\mu}(n+1)-1)}-\dfrac{\pi^2}{4(n+1)^2 (\overline{\mu}(n+1)-1)^2}+\dfrac{2 \log \overline{c}}{(n+1)^3}.\nonumber\\
\end{eqnarray}
Similarly as before, one can check that for $n \geq 9$,
\begin{equation}\label{lem2eqn22}
\overline{L}_{2}(n) >0 \ \ \ \text{and}\ \ \ \overline{L}_{1}(n) > -\dfrac{5 \log \overline{\mu}(n-1)}{(n-1)^3}.
\end{equation}
\eqref{lem2eqn19} and \eqref{lem2eqn22} yield for $n \geq 9$,
\begin{equation}\label{lem2eqn24}
\Delta^2 \dfrac{1}{n-1} \log \overline{T}(n-1) > \dfrac{3\pi}{4(n+1)^{5/2}}-\dfrac{5 \log \overline{\mu}(n-1)}{(n-1)^3}= \overline{G}_{1}(n).
\end{equation}
\eqref{lem2eqn18} and \eqref{lem2eqn24} together imply \eqref{lem2eqn1} for $n \geq 15$. We finish the proof by checking \eqref{lem2eqn1} numerically for $2\leq n \leq 14$.
\end{proof}
\begin{lemma}\label{Lemma3}
	For $n \geq 38$,
	\begin{equation}\label{lem3eqn1}
	\bigl|\Delta^2 \ \dfrac{1}{n-1} \overline{E}(n-1) \bigr|<\dfrac{5}{n-1}e^{-\dfrac{\overline{\mu}(n-1)}{12}}.
	\end{equation}
\end{lemma}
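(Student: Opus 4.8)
The plan is to exploit that the correction ratio $\overline{R}(n)/\overline{T}(n)$ is exponentially small, so that $\overline{E}(n-1)$ and each of its second differences sit far below the right-hand side of \eqref{lem3eqn1}. First I would produce an explicit, strictly decreasing upper bound for this ratio. From \eqref{overpartdecompeqn1} with $\overline{c}=\pi^2/8$ and $\overline{\mu}(n)=\pi\sqrt{n}$ one has $\overline{T}(n)=\tfrac{1}{8n}\bigl(1-\tfrac{1}{\overline{\mu}(n)}\bigr)e^{\overline{\mu}(n)}$, while by \eqref{overpartdecompeqn2} the numerator $\overline{R}(n)$ is the manifestly $e^{-\overline{\mu}(n)}$-small term plus $R_2(n,3)$, whose modulus obeys, via \eqref{Engel2},
\begin{equation*}
|R_2(n,3)|<\frac{3^{5/2}}{\pi n^{3/2}}\sinh\!\Bigl(\tfrac{\overline{\mu}(n)}{3}\Bigr)<\frac{3^{5/2}}{2\pi n^{3/2}}\,e^{\overline{\mu}(n)/3}.
\end{equation*}
Dividing by $\overline{T}(n)$, the $R_2$-contribution dominates and delivers a bound of the form
\begin{equation*}
\left|\frac{\overline{R}(n)}{\overline{T}(n)}\right|\le \frac{A}{\sqrt{n}}\,e^{-2\overline{\mu}(n)/3}=:\phi(n)
\end{equation*}
for an explicit absolute constant $A$ (essentially $\tfrac{4\cdot 3^{5/2}}{\pi}$, after absorbing the factor $(1-1/\overline{\mu}(n))^{-1}$ and the subdominant $e^{-\overline{\mu}(n)}$ piece). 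I would then record that $\phi$ is positive and decreasing and that $\phi(n)<\tfrac12$ throughout $n\ge 37$; since $\phi$ decreases, this needs only one numerical check at the left endpoint.

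With the ratio controlled, the second step turns it into a bound on $\overline{E}$. Because $|\overline{R}(n)/\overline{T}(n)|\le\phi(n)<\tfrac12$, the quantity $1+\overline{R}(n)/\overline{T}(n)$ stays above $\tfrac12$, so $\overline{E}(n)$ in \eqref{def} is well defined, and the elementary inequality $|\log(1+x)|\le 2|x|$ valid for $|x|\le\tfrac12$ yields $|\overline{E}(n)|\le 2\phi(n)$.

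For the third step I would bound the second difference directly by the triangle inequality rather than differentiating twice. Setting $h(x)=\overline{E}(x)/x$, so that $\Delta^2\tfrac{1}{n-1}\overline{E}(n-1)=h(n+1)+h(n-1)-2h(n)$, I get
\begin{equation*}
\Bigl|\Delta^2\tfrac{1}{n-1}\overline{E}(n-1)\Bigr|\le |h(n-1)|+|h(n+1)|+2|h(n)|\le 4\max_{k\in\{n-1,n,n+1\}}\frac{|\overline{E}(k)|}{k}.
\end{equation*}
Since $\phi(k)/k$ is decreasing and $|\overline{E}(k)|\le 2\phi(k)$, the bound $2\phi(k)/k$ on each $|h(k)|$ is maximal at $k=n-1$, which gives $\bigl|\Delta^2\tfrac{1}{n-1}\overline{E}(n-1)\bigr|\le \tfrac{8\phi(n-1)}{n-1}$.

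It then remains to compare with the target. Cancelling the common $\tfrac{1}{n-1}$, the claim \eqref{lem3eqn1} reduces to $8\phi(n-1)<5\,e^{-\overline{\mu}(n-1)/12}$, that is $\tfrac{8A}{\sqrt{n-1}}\,e^{-7\overline{\mu}(n-1)/12}<5$, where the exponent arises from $-\tfrac{2}{3}+\tfrac{1}{12}=-\tfrac{7}{12}$. As the left-hand side decays superexponentially, the inequality holds for every $n\ge 38$ once it is verified at the endpoint $n=38$, which closes the argument. The only genuinely delicate point is the first step: extracting a clean, monotone $\phi$ with an honest constant $A$ out of the $\sinh$-estimate while correctly discarding the lower-order contributions. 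Once that is done, the large exponential gap between $e^{-2\overline{\mu}/3}$ and $e^{-\overline{\mu}/12}$ makes every subsequent estimate comfortable. One could instead invoke Lemma \ref{Lemma1} and bound $h''$, but differentiating $\log(1+\overline{R}/\overline{T})$ twice is far more cumbersome, and the crude triangle-inequality bound already leaves ample slack.
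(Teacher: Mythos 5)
Your proposal is correct and follows essentially the same route as the paper: bound $|\overline{R}(n)/\overline{T}(n)|$ via Engel's error estimate with $\sinh x<e^{x}/2$, convert to a bound on $|\overline{E}|$ through an inequality of the form $|\log(1+x)|\le C|x|$, apply the triangle inequality to the three terms of the second difference, and use monotonicity to collapse everything onto the $n-1$ term. The only cosmetic difference is bookkeeping: the paper massages the ratio bound into the form $e^{-\overline{\mu}(n)/12}$ early on (so the constant $5$ emerges as $\tfrac54\cdot 4$), whereas you keep the sharper $e^{-2\overline{\mu}(n)/3}$ bound and defer the comparison with $5e^{-\overline{\mu}(n-1)/12}$ to a single endpoint check.
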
 
\begin{proof}
Using \eqref{def}, we get for $n \geq 2$, 
\begin{equation}\label{lem3eqn2}
\Delta^2 \ \dfrac{1}{n-1} \overline{E}(n-1) = \dfrac{1}{n+1} \log (1+\overline{e}(n+1)) -\dfrac{2}{n} \log (1+\overline{e}(n))+\dfrac{1}{n-1} \log (1+\overline{e}(n-1)),
\end{equation}
where 
\begin{equation*}
\overline{e}(n) = \dfrac{\overline{R}(n)}{\overline{T}(n)}.
\end{equation*}
Taking absolute value of $\Delta^2 \ \dfrac{1}{n-1} \overline{E}(n-1)$ in \eqref{lem3eqn2}, we obtain for all $n \geq 2$,
\begin{equation}\label{lem3eqn3}
\bigl|\Delta^2 \ \dfrac{1}{n-1} \overline{E}(n-1) \bigr| \leq  \dfrac{1}{n+1} |\log (1+\overline{e}(n+1))| +\dfrac{2}{n} |\log (1+\overline{e}(n))|+\dfrac{1}{n-1} |\log (1+\overline{e}(n-1))|.
\end{equation}
Therefore, it is enough to estimate $|\overline{e}(n)|$. Before proceed to estimate, let us recall the bound of Engel \cite{Engel}(cf. \eqref{Engel2}) for $N=3$ that yields for $n \geq 1$,
\begin{equation}\label{lem3eqn4}
|R_{2}(n,3)|< \dfrac{9 \sqrt{3}}{2 n \ \overline{\mu}(n)} e^{\overline{\mu}(n)/3}
\end{equation}
by making use of the fact that $\sinh (x) < \dfrac{e^x}{2}$ for $x>0$. Recalling the definitions in \eqref{overpartdecompeqn1}-\eqref{overpartdecompeqn2}, we obtain 
\begin{eqnarray}\label{lem3eqn5}
|\overline{e}(n)| &=& \Biggl|\dfrac{\biggl(1+\dfrac{1}{\overline{\mu}(n)}\biggr)}{\biggl(1-\dfrac{1}{\overline{\mu}(n)}\biggr)}e^{- 2\overline{\mu}(n)}+ \dfrac{R_{2}(n,3)}{\dfrac{1}{8n}\biggl(1-\dfrac{1}{\overline{\mu}(n)}\biggr)e^{\overline{\mu}(n)}}\Biggr|\nonumber\\
& \leq & \dfrac{\biggl(1+\dfrac{1}{\overline{\mu}(n)}\biggr)}{\biggl(1-\dfrac{1}{\overline{\mu}(n)}\biggr)}e^{- 2\overline{\mu}(n)} + \dfrac{36\sqrt{3}}{\overline{\mu}(n) \biggl(1-\dfrac{1}{\overline{\mu}(n)}\biggr)}e^{- 2\overline{\mu}(n)/3}\ \  \ (\text{by}\ \eqref{lem3eqn4})\nonumber\\
&=& \dfrac{e^{- 2\overline{\mu}(n)/3}}{\overline{\mu}(n)-1}\Biggl[\bigl(\overline{\mu}(n)+1\bigr)e^{- 4\overline{\mu}(n)/3}+36\sqrt{3}\Biggr]\nonumber\\
& =& \dfrac{e^{- \overline{\mu}(n)/12}}{\overline{\mu}(n)-1}\Biggl[\Bigl(\bigl(\overline{\mu}(n)+1\bigr)e^{- 4\overline{\mu}(n)/3}+36\sqrt{3}\Bigr)e^{- \overline{\mu}(n)/2}\Biggr]e^{- \overline{\mu}(n)/12}.
\end{eqnarray}
It can be easily check that 
\begin{equation}\label{lem3eqn6}
\dfrac{e^{- \overline{\mu}(n)/12}}{\overline{\mu}(n)-1} < 1 \ \ \ \text{for all}\ n \geq 1
\end{equation}
and 
\begin{equation}\label{lem3eqn7}
\Bigl(\bigl(\overline{\mu}(n)+1\bigr)e^{- 4\overline{\mu}(n)/3}+36\sqrt{3}\Bigr)e^{- \overline{\mu}(n)/2} < 1 \ \ \ \text{for all}\ n \geq 7.
\end{equation}
Invoking \eqref{lem3eqn6} and \eqref{lem3eqn7} into \eqref{lem3eqn5}, we obtain for $n \geq 7$
\begin{equation}\label{lem3eqn8}
|\overline{e}(n)| < e^{- \overline{\mu}(n)/12}
\end{equation}
and consequently for $n \geq 38$,
\begin{equation}\label{lem3eqn9}
e^{- \overline{\mu}(n)/12} < \dfrac{1}{5}.
\end{equation}
Putting together \eqref{lem3eqn8} and \eqref{lem3eqn9}, we get for all $n \geq 38$,
\begin{equation}\label{lem3eqn10}
|\overline{e}(n)|<\dfrac{1}{5}.
\end{equation}
Next we note that for all $n \geq 38$,
\begin{equation}\label{lem3eqn11}
|\log (1+ \overline{e}(n))| \leq \dfrac{|\overline{e}(n)|}{1-|\overline{e}(n)|} < \dfrac{5}{4} \ |\overline{e}(n)|
\end{equation}
because of the fact that, for $|x|<1$, $$|\log (1+x)|< \dfrac{|x|}{1-|x|}.$$
From \eqref{lem3eqn3} and \eqref{lem3eqn11}, we obtain for $n \geq 38$,
\begin{equation}\label{lem3eqn12}
\bigl|\Delta^2 \ \dfrac{1}{n-1} \overline{E}(n-1) \bigr| < \dfrac{5}{4} \Bigl(\dfrac{|\overline{e}(n+1)|}{n+1}+2 \dfrac{|\overline{e}(n)|}{n}+\dfrac{|\overline{e}(n-1)|}{n-1}\Bigr).
\end{equation}
Plugging \eqref{lem3eqn8} into \eqref{lem3eqn12}, we have for $n \geq 38$,
\begin{eqnarray}\label{lem3eqn13}
\bigl|\Delta^2 \ \dfrac{1}{n-1} \overline{E}(n-1) \bigr| &<&\dfrac{5}{4} \Bigl(\dfrac{e^{-\overline{\mu}(n+1)/12}}{n+1}+2 \dfrac{e^{-\overline{\mu}(n)/12}}{n}+\dfrac{e^{-\overline{\mu}(n-1)/12}}{n-1}\Bigr)\nonumber\\
& < & \dfrac{5}{n-1} e^{-\overline{\mu}(n-1)/12}
\end{eqnarray}
because the sequence $\Bigl\{\dfrac{1}{n}\ e^{-\overline{\mu}(n)/12}\Bigr\}_{n \geq 1}$ is decreasing. 
\end{proof}
\begin{lemma}\label{Lemma4}
	For $\alpha \in \mathbb{R}_{>0}$ and $n \geq 7$,
	\begin{equation}\label{lem4eqn1}
-\dfrac{2\alpha \log (n-1)}{(n-1)^3}+\dfrac{3\alpha}{(n-1)^3}	< -\alpha \  \Delta^2 \dfrac{1}{n-1} \log (n-1)<-\dfrac{2\alpha \log (n+1)}{(n+1)^3}+\dfrac{3\alpha}{(n+1)^3}.
	\end{equation}
\end{lemma}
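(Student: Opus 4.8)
The plan is to recognize the middle quantity as $\Delta^2 f(n-1)$ for the single smooth function $f(x)=\dfrac{\log x}{x}$, and then to invoke Lemma \ref{Lemma1} in exactly the way it was used for the functions $\overline{g}_i$ in Lemma \ref{Lemma2}. Indeed, by the definition of $\Delta^2$ recorded in Lemma \ref{Lemma1},
\begin{equation*}
\Delta^2 \dfrac{1}{n-1}\log(n-1)=\dfrac{\log(n+1)}{n+1}+\dfrac{\log(n-1)}{n-1}-\dfrac{2\log n}{n}=\Delta^2 f(n-1),
\end{equation*}
so the whole statement reduces to controlling $\Delta^2 f(n-1)$ through the monotonicity of $f''$.

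First I would differentiate directly to get
\begin{equation*}
f''(x)=\dfrac{2\log x-3}{x^3},\qquad f'''(x)=\dfrac{11-6\log x}{x^4}.
\end{equation*}
The sign of $f'''$ is governed by $11-6\log x$, which is negative exactly when $x>e^{11/6}\approx 6.25$. Hence for $n\geq 8$ the entire interval $[n-1,n+1]\subseteq[7,\infty)$ lies in the region where $f'''<0$, so $f''$ is decreasing there, and Lemma \ref{Lemma1} (the decreasing-second-derivative case) yields
\begin{equation*}
f''(n+1)<\Delta^2 f(n-1)<f''(n-1).
\end{equation*}

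The remaining step is purely algebraic. Multiplying through by $-\alpha$ reverses the inequalities because $\alpha>0$, giving $-\alpha f''(n-1)<-\alpha\,\Delta^2 f(n-1)<-\alpha f''(n+1)$; substituting
\begin{equation*}
-\alpha f''(x)=-\dfrac{2\alpha\log x}{x^3}+\dfrac{3\alpha}{x^3}
\end{equation*}
and evaluating at $x=n-1$ on the left and $x=n+1$ on the right reproduces precisely the lower and upper bounds claimed in \eqref{lem4eqn1}, settling the range $n\geq 8$.

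The one genuine subtlety, and the main obstacle, is the boundary value $n=7$: here the relevant interval is $[6,8]$, which straddles the critical point $e^{11/6}\approx 6.25$, so $f''$ increases on $[6,e^{11/6}]$ and decreases on $[e^{11/6},8]$, and Lemma \ref{Lemma1} does not apply verbatim. I would dispose of this single value by a direct check: computing $f(6),f(7),f(8)$ gives $\Delta^2 f(6)\approx 0.00258$, while $f''(6)\approx 0.00270$ and $f''(8)\approx 0.00226$, so that $f''(8)<\Delta^2 f(6)<f''(6)$; multiplying by $-\alpha$ then gives $-\alpha f''(6)<-\alpha\,\Delta^2 f(6)<-\alpha f''(8)$ for every $\alpha>0$, which is exactly \eqref{lem4eqn1} at $n=7$. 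This completes the proof.
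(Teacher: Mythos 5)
Your proof is correct and takes essentially the same route as the paper: the paper sets $f(n)=-\dfrac{\log n}{n}$, observes $f'''(n)>0$ for $n\geq 7$, applies Lemma \ref{Lemma1}, and then scales by $\alpha>0$, which is just the sign-flipped version of your argument with $\log x/x$. If anything you are more careful than the paper at the boundary: the paper invokes Lemma \ref{Lemma1} at $n=7$ even though the second derivative is not monotone on all of $[6,8]$ (the critical point is $e^{11/6}\approx 6.25$), a case you rightly isolate and settle by direct numerical verification.
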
 
\begin{proof}
We observe that, for $n \geq 7$,
\begin{equation*}
\Biggl(-\dfrac{\log n}{n}\Biggr)^{'''}= -\dfrac{11}{n^4}+\dfrac{6 \log n}{n^4}>0.
\end{equation*}	
Setting $f(n):= -\dfrac{\log n}{n}$ and applying Lemma \ref{Lemma1}, we obtain for $n \geq 7$,
\begin{equation}\label{lem4eqn2}
-\dfrac{2\log (n-1)}{(n-1)^3}+\dfrac{3}{(n-1)^3}< -\Delta^2 \dfrac{1}{n-1} \log (n-1) < -\dfrac{2\log (n+1)}{(n+1)^3}+\dfrac{3}{(n+1)^3}.
\end{equation}
Since $\alpha$ is a positive real number, from \eqref{lem4eqn2}, we obtain \eqref{lem4eqn1}.
\end{proof}
\begin{lemma}\label{Lemma5}
	For $\alpha \in \mathbb{R}_{\geq 0}$ and $n \geq 4021$,
	\begin{equation}\label{lem5eqn1}
	\Delta^{2} \log r_{\alpha}(n-1) < \log \biggl(1+\frac{3\pi}{4n^{5/2}}\biggr).
	\end{equation}
\end{lemma}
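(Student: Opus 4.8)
The plan is to feed the three term-wise estimates of Lemmas \ref{Lemma2}--\ref{Lemma4} into the decomposition \eqref{expressionfinalform} and then compare the resulting upper bound with the elementary estimate $\log(1+x)>x-\tfrac12 x^2$ (valid for $x>0$) applied to the right-hand side of \eqref{lem5eqn1}. Since $4021$ exceeds all the thresholds $n\geq 2$, $n\geq 38$, $n\geq 7$ occurring in Lemmas \ref{Lemma2}, \ref{Lemma3}, \ref{Lemma4} respectively, all three estimates are simultaneously available. Using the upper bound $\overline{G}_2(n)$ from \eqref{lem2eqn1} for the $\overline{T}$-term and the bound $\Delta^2\frac{1}{n-1}\overline{E}(n-1)<\frac{5}{n-1}e^{-\overline{\mu}(n-1)/12}$ coming from \eqref{lem3eqn1}, I would first obtain
\begin{equation*}
\Delta^2\log r_\alpha(n-1) < \overline{G}_2(n)+\frac{5}{n-1}e^{-\overline{\mu}(n-1)/12}-\alpha\,\Delta^2\frac{1}{n-1}\log(n-1).
\end{equation*}

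The next step is to dispose of the $\alpha$-term uniformly in $\alpha$. By Lemma \ref{Lemma4}, for $\alpha>0$ the last summand is at most $-\frac{2\alpha\log(n+1)}{(n+1)^3}+\frac{3\alpha}{(n+1)^3}$, which is negative as soon as $2\log(n+1)>3$, i.e. for all $n\geq 4$ and a fortiori for $n\geq 4021$; for $\alpha=0$ the term is identically zero. Thus for every $\alpha\geq 0$ this summand is $\leq 0$ and may be discarded, so that the worst case is $\alpha=0$ and the claim reduces to the single $\alpha$-free inequality $\overline{G}_2(n)+\frac{5}{n-1}e^{-\overline{\mu}(n-1)/12}<\log(1+\frac{3\pi}{4n^{5/2}})$. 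Bounding the right-hand side below by $\frac{3\pi}{4n^{5/2}}-\frac{9\pi^2}{32n^5}$ and inserting the definition of $\overline{G}_2(n)$, it then suffices to prove, for $n\geq 4021$,
\begin{equation*}
\frac{3\pi}{4}\Bigl(\frac{1}{(n-1)^{5/2}}-\frac{1}{n^{5/2}}\Bigr)+\frac{4}{(n-1)^3}+\frac{9\pi^2}{32n^5}+\frac{5}{n-1}e^{-\overline{\mu}(n-1)/12}<\frac{3\log\overline{\mu}(n+1)}{(n+1)^3}.
\end{equation*}

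The heart of the matter is that the single negative logarithmic term on the right must absorb the positive leading-order discrepancy on the left. Because $(n-1)^{-5/2}>n^{-5/2}$, the bound $\overline{G}_2$ overshoots the target leading term $\frac{3\pi}{4n^{5/2}}$; by the mean value theorem $\frac{1}{(n-1)^{5/2}}-\frac{1}{n^{5/2}}=\frac52\xi^{-7/2}<\frac{5}{2(n-1)^{7/2}}$ for some $\xi\in(n-1,n)$, so this discrepancy is only of order $n^{-7/2}$. The remaining left-hand terms are of order $n^{-3}$ (the $\frac{4}{(n-1)^3}$ term), $n^{-5}$, and exponentially small, so the whole left side is $O(n^{-3})$ with no logarithmic factor, whereas $\frac{3\log\overline{\mu}(n+1)}{(n+1)^3}=\frac{3\log\pi+\tfrac32\log(n+1)}{(n+1)^3}$ carries a $\log n$ factor and therefore eventually dominates.

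I expect the main obstacle to be turning this asymptotic dominance into an inequality valid for every $n\geq 4021$ rather than merely in the limit. The delicate competition is between $\frac{4}{(n-1)^3}$---inflated by the ratio $(n+1)^3/(n-1)^3>1$, which is still appreciable at moderate $n$---and $\frac{3\log\overline{\mu}(n+1)}{(n+1)^3}$; after also accounting for the $n^{-7/2}$ overshoot, this competition is exactly what pins the threshold at $n\geq 4021$. Concretely I would bound $(n+1)^3/(n-1)^3$ above by a constant close to $1$ valid for $n\geq 4021$, estimate $\log\overline{\mu}(n+1)=\log\pi+\tfrac12\log(n+1)$ from below, and reduce to a single elementary inequality in $n$ that can be checked directly once the dominant $\frac{\log n}{n^3}$ term has been isolated.
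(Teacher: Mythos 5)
Your proposal follows essentially the same route as the paper: the same decomposition \eqref{expressionfinalform}, the same three lemmas with the $\alpha$-term discarded as nonpositive, the same lower bound $\log(1+x)>x-\tfrac12x^2$ giving $\frac{3\pi}{4n^{5/2}}-\frac{9\pi^2}{32n^5}$, and the same final elementary comparison in which the $\frac{3\log\overline{\mu}(n+1)}{(n+1)^3}$ term absorbs the $n^{-7/2}$ overshoot and the $O(n^{-3})$ remainders. The only small inaccuracy is your guess about where the threshold comes from: in the paper the polynomial comparison \eqref{lem5eqn6} already holds for $n\geq 93$, and it is the replacement of $\frac{5}{n-1}e^{-\overline{\mu}(n-1)/12}$ by $\frac{5}{(n-1)^3}$ that forces $n\geq 4021$.
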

\begin{proof}
Using \eqref{lem2eqn1}, \eqref{lem3eqn1} and \eqref{lem4eqn1} into \eqref{expressionfinalform}, we obtain for $n \geq 38$,
\begin{equation}\label{lem5eqn2}
\Delta^{2} \log r_{\alpha}(n-1) < \overline{G}_2(n)+\dfrac{5}{n-1}e^{-\dfrac{\overline{\mu}(n-1)}{12}}-\dfrac{2\alpha \log (n+1)}{(n+1)^3}+\dfrac{3\alpha}{(n+1)^3}.
\end{equation}
Note that for all $n \geq 4$,
\begin{equation}\label{lem5eqn3}
-\dfrac{2\alpha \log (n+1)}{(n+1)^3}+\dfrac{3\alpha}{(n+1)^3} \leq 0
\end{equation}
and for $n \geq 4021$,
\begin{equation}\label{lem5eqn4}
\dfrac{5}{n-1}e^{-\dfrac{\overline{\mu}(n-1)}{12}} < \dfrac{5}{(n-1)^3}.
\end{equation}
Therefore from \eqref{lem5eqn3}-\eqref{lem5eqn4}, for all $n \geq 4021$, it follows that
\begin{equation}\label{lem5eqn5}
\Delta^{2} \log r_{\alpha}(n-1) < \dfrac{3\pi}{4(n-1)^{5/2}}-\dfrac{3 \log \overline{\mu}(n+1)}{(n+1)^3}+\dfrac{9}{(n-1)^3}.
\end{equation}
Apparently, for all $n \geq 93$, 
\begin{equation}\label{lem5eqn6}
\dfrac{3\pi}{4(n-1)^{5/2}}-\dfrac{3 \log \overline{\mu}(n+1)}{(n+1)^3}+\dfrac{9}{(n-1)^3} < \dfrac{3\pi}{4n^{5/2}}-\dfrac{9\pi^2}{32n^{5}}.
\end{equation}
Using the fact that for $x >0$, $\log (1+x) > x-\dfrac{x^2}{2}$, from \eqref{lem5eqn5} and \eqref{lem5eqn6}, we finally arrive at 
\begin{equation}\label{eqn5lem7}
\Delta^{2} \log r_{\alpha}(n-1) < \log \Bigl(1+\dfrac{3\pi}{4n^{5/2}}\Bigr).
\end{equation}
\end{proof}

\begin{lemma}\label{Lemma6}
	For $\alpha >0$ and $n \geq \max \Biggl\{\Bigl[\dfrac{3490}{\alpha}\Bigr]+2,\Bigl \lceil \Bigl(\dfrac{4(11+5\alpha)}{3\pi}\Bigr)^4\Bigr \rceil,5505\Biggr\}$,
	\begin{equation}\label{lem6eqn1}
	\Delta^{2} \log r_{\alpha}(n-1) > \log \biggl(1+\frac{3\pi}{4n^{5/2}}-\frac{11+5\alpha}{n^{11/4}}\biggr).
	\end{equation}
\end{lemma}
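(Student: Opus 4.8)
The plan is to feed the three lower estimates of Lemmas \ref{Lemma2}, \ref{Lemma3}, \ref{Lemma4} into the decomposition \eqref{expressionfinalform} and compare the result against $\log(1+y)$, where I abbreviate $y := \frac{3\pi}{4n^{5/2}} - \frac{11+5\alpha}{n^{11/4}}$. From \eqref{lem2eqn1} I take $\Delta^2 \frac{1}{n-1}\log\overline{T}(n-1) > \overline{G}_1(n)$; from \eqref{lem3eqn1} I use $\Delta^2 \frac{1}{n-1}\overline{E}(n-1) > -\frac{5}{n-1}e^{-\overline{\mu}(n-1)/12}$; and from the lower bound in \eqref{lem4eqn1} I use $-\alpha\,\Delta^2\frac{1}{n-1}\log(n-1) > -\frac{2\alpha\log(n-1)}{(n-1)^3} + \frac{3\alpha}{(n-1)^3}$. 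Since all the thresholds of those lemmas lie well below $5505$, summing gives, on the range of interest,
\begin{equation*}
\Delta^2 \log r_\alpha(n-1) > \overline{G}_1(n) - \frac{5}{n-1}e^{-\overline{\mu}(n-1)/12} - \frac{2\alpha\log(n-1)}{(n-1)^3} + \frac{3\alpha}{(n-1)^3} =: \overline{H}(n),
\end{equation*}
so it suffices to prove $\overline{H}(n) \geq \log(1+y)$.

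Second, I would observe that the threshold $n \geq \lceil (4(11+5\alpha)/3\pi)^4\rceil$ is exactly what makes $y>0$, since $y>0 \iff n^{1/4} > 4(11+5\alpha)/(3\pi)$. With $y>0$ the elementary bound $\log(1+y) < y$ applies, so the problem reduces to the purely algebraic inequality $\overline{H}(n) \geq y$. Writing $\overline{G}_1(n) = \frac{3\pi}{4(n+1)^{5/2}} - \frac{5\log\overline{\mu}(n-1)}{(n-1)^3}$ and grouping the leading terms, $\overline{H}(n) \geq y$ becomes the requirement that the budget $\frac{11+5\alpha}{n^{11/4}}$, together with the positive term $\frac{3\alpha}{(n-1)^3}$, outweigh every remaining negative contribution: the curvature gap $\frac{3\pi}{4}\bigl(\frac{1}{n^{5/2}} - \frac{1}{(n+1)^{5/2}}\bigr) = O(n^{-7/2})$, the logarithmic terms $\frac{5\log\overline{\mu}(n-1)}{(n-1)^3}$ and $\frac{2\alpha\log(n-1)}{(n-1)^3}$, and the exponential error $\frac{5}{n-1}e^{-\overline{\mu}(n-1)/12}$, which I would dominate by $\frac{5}{(n-1)^3}$ exactly as in \eqref{lem5eqn4}.

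Third, because the dominant budget term has order $n^{-11/4}$, which strictly exceeds the orders $n^{-7/2}$ and $n^{-3}\log n$ of all the negative terms, the inequality must hold once $n$ is large; the task is to make this explicit and uniform in $\alpha$. I would split the budget: the $\alpha$-free part $\frac{11}{n^{11/4}}$ is set against the $\alpha$-free negatives (the curvature gap, the $\log\overline{\mu}$ term, and the exponential term), which is where the baseline $n \geq 5505$ enters; and the $\alpha$-proportional part $\frac{5\alpha}{n^{11/4}} + \frac{3\alpha}{(n-1)^3}$ is set against the remaining term $\frac{2\alpha\log(n-1)}{(n-1)^3}$, a comparison of $n^{-11/4}$ with $n^{-3}\log n$ whose validity is ensured by the conditions $n \geq [3490/\alpha]+2$ and $n \geq \lceil(4(11+5\alpha)/3\pi)^4\rceil$.

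Finally, I expect the main obstacle to be the bookkeeping in the third step rather than any conceptual difficulty: one must track the shift from $n$ to $n\pm 1$ in the half-integer powers, bound the curvature gap cleanly (e.g.\ by the mean value theorem, giving $\frac{3\pi}{4}(\frac{1}{n^{5/2}} - \frac{1}{(n+1)^{5/2}}) < \frac{15\pi}{8n^{7/2}}$), and verify that the stated thresholds really do validate each elementary comparison. The delicate point is the $\alpha$-uniformity: the $\alpha$-dependent logarithmic term and the $\alpha$-proportional budget must be balanced so that no hidden dependence forces an $n$ larger than the prescribed $N(\alpha)$.
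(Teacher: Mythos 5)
Your proposal is correct and follows essentially the same route as the paper: the same three lemmas fed into \eqref{expressionfinalform}, the reduction to the algebraic inequality $\overline{H}(n)\geq y$ via $y>0$ (guaranteed by the $N_3(\alpha)$ threshold) and $\log(1+y)<y$, and the same key estimates ($\log(n-1)<(n-1)^{1/4}$ for $n\geq 5505$ and the curvature-gap bound $\tfrac{3\pi}{4}\bigl(n^{-5/2}-(n+1)^{-5/2}\bigr)<\tfrac{15\pi}{8}n^{-7/2}$). The only cosmetic difference is the bookkeeping of the exponential error: you absorb it into the $\alpha$-free budget via $\tfrac{5}{n-1}e^{-\overline{\mu}(n-1)/12}<\tfrac{5}{(n-1)^3}$, whereas the paper cancels it against $\tfrac{3\alpha}{(n-1)^3}$, which is where the threshold $n\geq[3490/\alpha]+2$ actually enters; both groupings close.
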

\begin{proof}
Using \eqref{lem2eqn1}, \eqref{lem3eqn1} and \eqref{lem4eqn1} into \eqref{expressionfinalform}, we obtain for $n \geq 250$,
\begin{equation}\label{lem6eqn2}
\Delta^{2} \log r_{\alpha}(n-1) > \overline{G}_1(n)-\dfrac{5}{n-1}e^{-\dfrac{\overline{\mu}(n-1)}{12}}-\dfrac{2\alpha \log (n-1)}{(n-1)^3}+\dfrac{3\alpha}{(n-1)^3}.
\end{equation}
It is easy to check that for $n \geq \text{max}  \Bigl\{\Bigl[\dfrac{3490}{\alpha}\Bigr]+2,4522\Bigr\}:=N_1(\alpha)$,
\begin{equation}\label{lem6eqn3}
-\dfrac{5}{n-1}e^{-\dfrac{\overline{\mu}(n-1)}{12}}+\dfrac{3\alpha}{(n-1)^3} > \dfrac{3\alpha}{(n-1)^3}-\dfrac{10470}{(n-1)^4}>0.
\end{equation}
Therefore for all $n \geq  N_1(\alpha) $,
\begin{equation}\label{lem6eqn4}
\Delta^{2} \log r_{\alpha}(n-1) > \dfrac{3\pi}{4(n+1)^{5/2}}-\dfrac{5 \log \overline{\mu}(n-1)}{(n-1)^3}-\dfrac{2\alpha \log (n-1)}{(n-1)^3}.
\end{equation}
It is immediate that for $n \geq 11$,
\begin{equation}\label{lem6eqn5}
\log \overline{\mu}(n-1)< \log (n-1)
\end{equation}
and for $n \geq 5505$,
\begin{equation}\label{lem6eqn6}
\log (n-1) < (n-1)^{1/4}.
\end{equation}
Putting \eqref{lem6eqn5} and \eqref{lem6eqn6} into \eqref{lem6eqn4}, we obtain for $n \geq \text{max}\{N_1(\alpha),5505\}$,
\begin{eqnarray}\label{lem6eqn7}
\Delta^{2} \log r_{\alpha}(n-1) &>& \dfrac{3\pi}{4(n+1)^{5/2}} -\dfrac{5+2\alpha}{(n-1)^{11/4}}
\end{eqnarray}
It remains to show that 
\begin{equation}\label{lem6eqn8}
\dfrac{3\pi}{4(n+1)^{5/2}} -\dfrac{5+2\alpha}{(n-1)^{11/4}} > \dfrac{3\pi}{4n^{5/2}} -\dfrac{11+5\alpha}{n^{11/4}}.
\end{equation}
For $n \geq \ \text{max}\Biggl\{\Biggl \lceil \Biggr(\dfrac{15 \pi}{8 (\alpha+1)}\Biggr)^{4/3}\Biggr \rceil,5\Biggr \} := N_2(\alpha)$, it follows that
\begin{equation}\label{lem6eqn9}
\dfrac{11+5\alpha}{n^{11/4}}-\dfrac{5+2\alpha}{(n-1)^{11/4}}\underset{n \geq 5}{>} \dfrac{1+\alpha}{n^{11/4}}>\dfrac{15\pi}{8 n^{7/2}} \underset{n \geq 1}{>} \dfrac{3\pi}{4}\Biggl(\dfrac{1}{n^{5/2}}-\dfrac{1}{(n+1)^{5/2}}\Biggr).
\end{equation}
From \eqref{lem6eqn7} and \eqref{lem6eqn8}, we obtain for $n \geq \ \text{max}\Bigl\{N_1(\alpha), N_2(\alpha)\Bigr\}$,
\begin{equation}\label{lem6eqn10}
\Delta^{2} \log r_{\alpha}(n-1)  > \dfrac{3\pi}{4n^{5/2}} -\dfrac{11+5\alpha}{n^{11/4}}.
\end{equation}
It is easy to check that for $n \geq \Biggl \lceil \Bigl(\dfrac{4(11+5\alpha)}{3\pi}\Bigr)^4\Biggr \rceil:=N_3(\alpha)$,
\begin{equation}\label{lem6eqn11}
\dfrac{3\pi}{4n^{5/2}} -\dfrac{11+5\alpha}{n^{11/4}}>0
\end{equation}
and using the fact that for $x>0$, $x>\log (1+x)$, we finally get for $n \geq \text{max}\{N_1(\alpha),N_3(\alpha),5505\}$ $\Bigl(\text{since}, N_3(\alpha) > N_2(\alpha) \ \text{for}\  \alpha >0\Bigr)$,
\begin{equation}\label{lem6eqn12}
\Delta^{2} \log r_{\alpha}(n-1) > \log \Bigl(1+\dfrac{3\pi}{4n^{5/2}} -\dfrac{11+5\alpha}{n^{11/4}}\Bigr).
\end{equation}
\end{proof}

\emph{Proof of Theorem \ref{mainresult}}: For $\alpha \in \mathbb{R}_{>0}$, from \eqref{lem5eqn1} and \eqref{lem6eqn1} we obtain for all $n \geq \ \text{max} \Biggl\{\Bigl[\dfrac{3490}{\alpha}\Bigr]+2,\Bigl \lceil \Bigl(\dfrac{4(11+5\alpha)}{3\pi}\Bigr)^4\Bigr \rceil,5505\Biggr\}$,
\begin{equation}\label{thmeqn1}
	\log \biggl(1+\frac{3\pi}{4n^{5/2}}-\frac{11+5\alpha}{n^{11/4}}\biggr)<\Delta^{2} \log r_{\alpha}(n-1) < \log \biggl(1+\frac{3\pi}{4n^{5/2}}\biggr).
\end{equation}
For $\alpha =0$, we have already seen that for $n \geq 4021$,
\begin{equation}\label{thmeqn2}
\Delta^{2} \log r_{\alpha}(n-1) < \log \biggl(1+\frac{3\pi}{4n^{5/2}}\biggr).
\end{equation}
For $\alpha =0$, from \eqref{lem6eqn2} we get for $n \geq 250$,
\begin{equation}\label{thmeqn3}
\Delta^{2} \log r_{\alpha}(n-1) > \overline{G}_1(n)-\dfrac{5}{n-1}e^{-\dfrac{\overline{\mu}(n-1)}{12}}.
\end{equation}
Following the same approach, it can be checked that for $n \geq 4522$,
\begin{equation}\label{thmeqn4}
-\dfrac{5}{n-1}e^{-\dfrac{\overline{\mu}(n-1)}{12}} > -\dfrac{10470}{(n-1)^4}
\end{equation}
and consequently for $n \geq 476$,
\begin{equation}\label{thmeqn5}
\overline{G}_1(n)-\dfrac{10470}{(n-1)^4} > \dfrac{3\pi}{4n^{5/2}}-\dfrac{11}{n^{11/4}}>0.
\end{equation}
So, for $\alpha=0$, by \eqref{thmeqn3}-\eqref{thmeqn5}, we obtain for $n \geq 4522$,
\begin{equation}\label{thmeqn6}
\Delta^{2} \log r_{\alpha}(n-1) > \log \biggl(1+\frac{3\pi}{4n^{5/2}}-\frac{11}{n^{11/4}}\biggr).
\end{equation}
Putting \eqref{thmeqn2} and \eqref{thmeqn6}, for $n \geq 4522$, it follows that
\begin{equation}\label{thmeqn7}
\log \biggl(1+\frac{3\pi}{4n^{5/2}}-\frac{11}{n^{11/4}}\biggr)<\Delta^{2} \dfrac{1}{n-1} \log \overline{p}(n-1)< \log \biggl(1+\frac{3\pi}{4n^{5/2}}\biggr).
\end{equation}
This finishes the proof.
\qed

\section{Conclusion}\label{Conclude}
We conclude this paper by considering the following problem;
\begin{problem}
Let $\alpha$ be a non-negative real number. Then for each $r\geq 1$, does there exists a positive integer $N(r,\alpha)$ so that for all $n \geq N(r,\alpha)$, one can obtain both upper bound and lower bound of $(-1)^r \Delta^r \log r_{\alpha}(n)$ that finally shows the asymptotic growth of $(-1)^r \Delta^r \log r_{\alpha}(n)$ as $n$ tends to infinity?
\end{problem}
For $r=2$, we have already seen that one can estimate $(-1)^r \Delta^r \log r_{\alpha}(n)$, as given in Theorem \ref{mainresulteqn} and its asymptotic growth is reflected in Corollary \ref{cor4}.
\begin{center}
	\textbf{Acknowledgements}
\end{center}
The author would like to express sincere gratitude to her advisor Prof. Manuel Kauers for his valuable suggestions on the paper. The research was funded by the Austrian Science Fund (FWF): W1214-N15, project DK13.

\end{document}